\documentclass[12pt]{article}
\usepackage{amscd,amsfonts,amsmath,amssymb,amstext,amsthm}
\title {Some properties of a Subclass of Univalent Functions}
\date{}
\author{Ntatin B.\\
 \small Department of Mathematics\\
 \small Austin Peay State University\\
  \small Clarksville, TN 37043\\
  \small\texttt{ntatinb@apsu.edu}}
\sloppy
\parindent0ex
\pagenumbering {arabic}
\newcommand{\s} {{\smallskip \noindent}}
\newcommand{\m} {{\medskip \noindent}}

\newtheorem{thm} {Theorem} [section]

\newtheorem{lem} [thm]{Lemma}

\theoremstyle{definition}

\theoremstyle{remark}
\newtheorem{rem} {Remark}
\begin{document}
\maketitle
\footnotetext{2000 Mathematical Subject Classification:Primary 30C45; Secondary 30C50, 30C55}
\begin{abstract}
We give some coefficient bounds and distortion theorems for a subclass of univalent functions in the unit disk, and defined using the S\^{a}l\^{a}gean differential operator. The results generalize and unify some well known results for several subclasses of univalent functions defined on the unit disk having form $f(z)=z+\sum_{n=2}^{\infty}a_nz^n$, normalized such that $f(0)=0$ and $f^\prime(0)=1$.
\end{abstract}

\medbreak \noindent\medbreak \noindent{\ \textbf{Key Words:}}
coefficient bounds, S\^{a}l\^{a}gean differential operator, univalent functions

\setcounter{section}{0}

\section {Introduction and Preliminaries}
Let $\mathbb A$ denote the class of functions analytic in the unit disk $E={z\in \mathbb C:|z|<1}$, and of the form
$$
f(z)=z+\sum_{n=2}^{\infty}a_nz^n
$$
normalized such that $f(0)=0$ and $f^\prime(0)=1$.

Also, let ${\cal P}$ denote the class consisting of analytic functions $p(z)$, such that $p(0)=1$, with positive real part, i.e., $\Re\{p(z)\}>0$, in $E$. This class of functions is known as the class of Carath\'{e}odory functions and have form
$$
p(z)=1+\sum_{k=1}^{\infty}a_kz^k.
$$
There are many subclasses of univalent functions defined using similar conditions as in the case of the class ${\cal P}$ above, for example,
\begin{eqnarray*}
S_0&=&\{f(z)\in \mathbb A:\Re\{f(z)/z\}>0, z\in E\},\\
B(\beta)&=&\{f(z)\in\mathbb A:\Re\{f(z)/z\}>\beta, z\in E, 0\leq\beta<1\},\\
\delta(\beta)&=&\{f(z)\in\mathbb A:\Re\{f^\prime(z)\}>\beta, z\in E, 0\leq\beta<1\},\\
B_n(\alpha)&=&\{f(z)\in\mathbb A:\Re\{D^n[f(z)^\alpha/z^\alpha]\}>0, z\in E, \alpha>0, n=0,1,2,...\}.
\end{eqnarray*}

All these subclasses of functions analytic in the unit disk $E$ are well known and have been investigated repeatedly by several authors including (for example, \cite{AO}, \cite{Ha}, \cite{JI}, \cite{SG}, \cite{Sri}, and \cite{TD}). The subclass $B_n(\alpha)$ is called the class of Bazilevi\^{c} functions of type $\alpha$ and $D^n$ is the S\^{a}l\^{a}gean differential operator defined recursively as follows \cite{SG1}; $D^0f(z)=f(z), D^1f(z)=Df(z)=zf(z)$, and $D^nf(z)=D(D^{n-1}f(z))=z[D^{n-1}f(z)]^\prime$, for $n\geq 2$.
\smallskip

We define the class $T^\alpha_n(\beta)$ to be the subclass of $\mathbb A$ consisting of functions satisfying the conditions;
$$
\Re\{D^n[f(z)^\alpha/z^\alpha]\}>\beta, z\in E, \alpha>0, n=0,1,2,\ldots,
$$
where as in the definition of $B_n(\alpha)$ above, $0\leq \beta<1$, and $D^n$ is the S\^{a}l\^{a}gean differential operator.
\smallskip

There are certain relations between some of the subclasses above, for example, $\delta(0)=B_1(1)$. In particular, $T^\alpha_n(\beta)$ is a generalization of some kind of all the subclasses of functions defined above. Indeed, if in the definition of $T^\alpha_n(\beta)$ we put
\begin{itemize}
\item $n=0, \alpha=1, \beta=0$, we obtain the class $S_0$.
\item $n=0, \alpha=1$, we obtain the class $B(\beta)$.
\item $n=1, \alpha=1, \beta=0$, we obtain the class $\delta(0)=B_1(1)$.
\item $n=0, \alpha=1$, we obtain the class $\delta(\beta)$.
\end{itemize}

In \cite{TO}, Opoola T.O. established the following properties for functions in the class $T^\alpha_n(\beta)$;
\begin{itemize}
\item for $n\geq 1, T^\alpha_n(\beta)\subset \mathbb A$,
\item for $n\geq1, T^\alpha_{n+1}(\beta)\subset T^\alpha_n(\beta/\alpha)$,
\item if $f(z)\in T^\alpha_n(\beta)$, then $\Re[f(z)/z]^\alpha >\beta$, for $z\in E$,
\item if $f(z)\in T^\alpha_n(\beta)$ and $\alpha+c > 0$, then the function defined by
$$
F(z)^\alpha = \frac{\alpha+c}{z^c}\int t^{c-1}f(t)dt, z\in E,
$$
is also in the class $T^\alpha_n(\beta)$.
\end{itemize}
\m
\begin{rem}
Observe that for $f(z)=z+a_2z^2+\ldots$, we have that
$$
D[f^\alpha(z)]=z[f^\alpha(z)]^\prime =\alpha z^\alpha+\alpha a_2z^{\alpha+1}+\ldots.
$$
Now since $D^{n+1}[f^\alpha(z)]=D^n[Df^\alpha(z)]$, it follows that if $f(z)\in T^\alpha_{n+1}(\beta)$, that is, $\Re\{D^{n+1}[f^\alpha(z)]/z^\alpha\}$, then $f(z)\in T^\alpha_n(\beta/\alpha)$, or equivalently, $\Re\{D^n[f^\alpha(z)]/(\alpha z^\alpha)]\}>\beta$, and the second property above follows.

\s
It is worth mentioning here that the second property above was actually misstated in \cite{TO} as follows; $T^\alpha_{n-1}(\beta)\subset T^\alpha_n(\beta)$ for $n\geq 1$.
\end{rem}

\m
Our main aim in the present paper is to further investigate the properties of functions in this subclass of univalent functions. In particular, for functions in the subclass $T^\alpha_n(\beta)$, we obtain some coefficient bounds, and a distortion theorem (see Theorems 2.1 and 3.1 below).

\section {Coefficient Bounds}
In general, the coefficient problem for functions in class $\mathbb A$ is to determine the region of $\mathbb C^{n-1}$, the $(n-1)-$dimensional complex plane, occupied by the points $\{a_2,a_3,\ldots ,a_n\}$ for all functions $f(z)\in\mathbb A$. However, the deduction of such precise analytic information from the singular geometric hypothesis of univalence, is seemingly very difficult. We therefore, confine attention to the more specific problem of estimating $|a_n|$ instead.
For the subclass $T^\alpha_n(\beta)$ of the class $\mathbb A$, we establish the following coefficient bounds:

\begin{thm}
Let
$$
f(z)=z+\sum_{n=2}^{\infty}a_nz^n \in T^\alpha_n(\beta),
$$
then for $\beta \in [0,1]$ and $z\in E$, we have the following estimates;

\begin{enumerate}
\item $|a_2|\leq \frac{2(1-\beta)\alpha^{n-1}}{(\alpha+1)^n}$, for $\alpha >0$.

\item $|a_3|\leq \begin{cases} \frac{2(1-\beta)\alpha^{n-1}[(\alpha+1)^{2n}-(\alpha-1)\alpha^{n-1}(1-\beta)(\alpha-2)]}{(\alpha+2)^2(\alpha+1)^{2n}}, &\mbox{for}\; 0<\alpha\leq 1,\\
\frac{2(1-\beta)\alpha^{n-1}}{(\alpha+2)^n}, &\mbox{for} \;\alpha\geq 1.
\end{cases}$
\item $|a_4|\leq \begin{cases} \frac{A_1-A_2+A_3-A_4}{3(\alpha+3)^n(\alpha+2)^n(\alpha+1)^{3n}}, &\mbox{for}\; 0<\alpha\leq 1,\\
\frac {2(1-\beta)\alpha^{n-1}}{(\alpha+3)^n}, &\mbox{for}\; \alpha \geq 1,
\end{cases}$
\end{enumerate}
where
\begin{itemize}
\item $A_1=6\alpha^{n-1}(1-\beta)(\alpha+2)^n(\alpha+1)^{3n}$.\\
\item $A_2=12(\alpha-1)\alpha^{2n-2}(1-\beta)(\alpha+1)^{2n}(\alpha+3)^n$.\\
\item $A_3=12(\alpha-1)^2\alpha^{3n-3}(1-\beta)(\alpha+2)^n(\alpha+3)^n$.\\
\item $A_4=2(\alpha-1)(\alpha-2)\alpha^{2n-2}(1-\beta)(\alpha+2)^3(\alpha+3)^n$.
\end{itemize}
Inequality $(1)$ is sharp and equality occurs for the Koebe function
$$
K(z)=\frac{z}{(1-z)^2}
$$
or one of its rotations $\tilde{K}(z)=\exp(-\xi i)K(\exp(\xi i)z)$, where $\xi$ is a real number.
\end{thm}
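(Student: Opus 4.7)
The starting point is the standard reduction to a Carath\'eodory function. Since $f\in T^\alpha_n(\beta)$ forces $\Re\{D^nf^\alpha(z)/(\alpha^n z^\alpha)\}>\beta$ in $E$, and since the left-hand side equals $1$ at the origin, there exists $p(z)=1+c_1z+c_2z^2+c_3z^3+\cdots\in\mathcal{P}$ such that
$$
\frac{D^nf^\alpha(z)}{\alpha^n z^\alpha}=\beta+(1-\beta)\,p(z).
$$
All three estimates are to be extracted from this identity by combining the classical bound $|c_k|\le 2$ with the Libera--Zlotkiewicz parametrisations $2c_2=c_1^2+(4-c_1^2)x$ and $4c_3=c_1^3+2(4-c_1^2)c_1x-(4-c_1^2)c_1x^2+2(4-c_1^2)(1-|x|^2)y$, where $|x|,|y|\le 1$.

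The next step is to expand the left-hand side as a power series. The binomial expansion gives $f^\alpha(z)/z^\alpha=(1+a_2z+a_3z^2+\cdots)^\alpha=1+B_1z+B_2z^2+B_3z^3+\cdots$ with
$$
B_1=\alpha a_2,\qquad B_2=\alpha a_3+\tfrac{\alpha(\alpha-1)}{2}a_2^2,\qquad B_3=\alpha a_4+\alpha(\alpha-1)a_2a_3+\tfrac{\alpha(\alpha-1)(\alpha-2)}{6}a_2^3.
$$
Applying $D^n$ term by term and then dividing by $\alpha^n z^\alpha$ multiplies the $k$-th coefficient by $((\alpha+k)/\alpha)^n$, so matching coefficients of $z^k$ in the identity above yields $B_k=\alpha^n(1-\beta)c_k/(\alpha+k)^n$. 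Inverting these relations recursively expresses $a_2$, $a_3$, and $a_4$ as explicit polynomials in $c_1,c_2,c_3$; the $a_3$-formula carries an $(\alpha-1)a_2^2$ correction, and the $a_4$-formula carries both $(\alpha-1)a_2a_3$ and $(\alpha-1)(\alpha-2)a_2^3$ corrections coming from $B_2$ and $B_3$.

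The estimate on $|a_2|$ is then immediate from $|c_1|\le 2$. For $|a_3|$ I substitute the $c_2$-parametrisation, set $t=|c_1|\in[0,2]$, and rewrite the bound in the form $|c_1^2(A_1-A_2)+A_1(4-c_1^2)x|$ with constants $A_1,A_2$ depending only on $\alpha,n,\beta$. When $\alpha\ge 1$ one checks $0\le A_2\le A_1$, so the correction is absorbed and the maximum, attained at $t=0$, equals $2(1-\beta)\alpha^{n-1}/(\alpha+2)^n$; when $0<\alpha\le 1$ one has $A_2\le 0$, the two pieces add constructively, and the extremum migrates to the boundary $t=2$, producing the case-one formula. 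The bound on $|a_4|$ is obtained by the same procedure, now also using the parametrisation of $c_3$: in the range $\alpha\ge 1$ the optimum occurs at $t=0$ and collapses to $2(1-\beta)\alpha^{n-1}/(\alpha+3)^n$, while for $0<\alpha\le 1$ the four constants $A_1,\dots,A_4$ appear as the main $c_3$-contribution together with the mixed $c_1c_2$-term and the two separate $c_1^3$-pieces coming from $a_2a_3$ and $a_2^3$. Sharpness of~(1) is verified by the extremal choice $p(z)=(1+z)/(1-z)$, which yields the Koebe function (or one of its rotations) as claimed. The main obstacle is the bookkeeping in the $a_4$ estimate, where the cancellations between the main $c_3$-term and the three correction terms must be traced carefully as $\alpha$ crosses~$1$.
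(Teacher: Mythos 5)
Your reduction coincides with the paper's: you pass to a Carath\'eodory function $p$ via $D^nf^\alpha(z)/(\alpha^n z^\alpha)=\beta+(1-\beta)p(z)$, expand $(f(z)/z)^\alpha$ binomially, use $D^n[z^{\alpha+k}]=(\alpha+k)^nz^{\alpha+k}$ to match coefficients, and invert to obtain exactly the expressions for $a_2,a_3,a_4$ in terms of $c_1,c_2,c_3$ that appear in equations (4)--(6) of the paper; the bound on $|a_2|$ and the sharpness statement are then identical in both arguments. Where you genuinely diverge is the final estimation step. The paper uses only $|c_k|\le 2$ and the triangle inequality, and for $\alpha\ge 1$ it simply discards the terms carrying a factor $(\alpha-1)$ on the grounds that they are ``negative''; since those terms involve the complex quantities $c_1^2$ and $c_1c_2$, that is not a legitimate application of the triangle inequality, so the $\alpha\ge1$ halves of (2) and (3) are the weakest point of the published proof. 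Your Libera--Zlotkiewicz substitution is precisely the tool that repairs this for $|a_3|$: writing $a_3=\lambda(c_2-\nu c_1^2)$ with $\nu=(\alpha-1)\alpha^{n-1}(1-\beta)(\alpha+2)^n/\bigl(2(\alpha+1)^{2n}\bigr)$, the elementary inequality $\alpha^{n-1}(\alpha+2)^n/(\alpha+1)^{2n}\le 1/\alpha$ gives $0\le\nu\le\tfrac12$ for $\alpha\ge1$, the maximum over $t=|c_1|$ sits at $t=0$, and the clean bound follows; for $0<\alpha\le1$ one has $\nu\le 0$, the maximum migrates to $t=2$, and you recover the paper's case-one expression. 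For the $0<\alpha\le1$ half of (3) your route is also safe, since the Libera--Zlotkiewicz constraints carve out a subset of the polydisc $\{|c_k|\le2\}$, so your optimum can never exceed the paper's term-by-term bound $(A_1-A_2+A_3-A_4)$ over the common denominator. In short, your method costs more bookkeeping but buys rigor exactly where the paper is hand-waving.

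The one substantive gap is $|a_4|$ for $\alpha\ge1$. You assert that the optimum of the three-parameter family over $(t,x,y)$ occurs at $t=0$ and collapses to $2(1-\beta)\alpha^{n-1}/(\alpha+3)^n$, but this is a cubic-functional estimate of the form $|c_3-2\nu_1c_1c_2+\nu_2c_1^3|\le2$, and such bounds hold only under explicit inequalities relating $\nu_1$ and $\nu_2$ which you have not verified for the coefficients arising from equation (7). Since this is exactly the step the paper itself fudges, you cannot lean on the published argument here; you must carry out the $(t,x,y)$ optimization (or verify the known sufficient conditions for the cubic functional) before the $\alpha\ge1$ part of estimate (3) is actually proved.
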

\begin{proof}
Recall that by definition, $f(z)=z+\sum_{n=2}^{\infty}a_nz^n \in T^\alpha_n(\beta)$ if and only if $\Re\{D^n[f(z)^\alpha/z^\alpha]\}>\beta, \mbox{for} \; z\in E, \alpha>0,\;0\leq\beta\leq 1,\; \mbox{and}\;n=0,1,2,\ldots$. Now using binomial expansion, we obtain
\begin{equation}\label{1}
\begin{split}
f(z)^\alpha  = &z^\alpha+\alpha a_2z^{\alpha+1}+[\alpha a_3+\frac{\alpha(\alpha-1)}{2!}a_2^3]z^{\alpha+2}\\
           + &[\alpha a_4+\frac{\alpha(\alpha-1)}{2!}2a_2a_3+\frac{\alpha(\alpha-1)(\alpha-2)}{3!}a_2^3]z^{\alpha+3}\\
           + &[\alpha a_5+\frac{\alpha(\alpha-1)}{2!}(2a_2a_4+a_3^2)+\frac{\alpha(\alpha-1)(\alpha-2)}{3!}3a_2^2a_3\\
           + &\frac{\alpha(\alpha-1)(\alpha-2)(\alpha-3)}{4!}a_2^4]z^{\alpha+4}+\ldots.
\end{split}
\end{equation}
With hindsight of the above expansion, we define a new function as follows;
\begin{equation}\label{2}
\begin{split}
P(z) = &\frac {D^nf(z^\alpha)/(\alpha^nz^\alpha)}{1-\beta}\\
     =& 1+\frac{\alpha(\alpha+1)^na_2}{\alpha^n(1-\beta)}z+\frac{\alpha(\alpha+2)^n}{\alpha^n(1-\beta)}[a_3+\frac{(\alpha-1)}{2!}a_2^2]z^2\\
     &+ \frac{\alpha(\alpha+3)^n}{\alpha^n(1-\beta)}[a_4+\frac{(\alpha-1)}{2!}2a_2a_4+\frac{(\alpha-1)(\alpha-2)}{3!}a_2^3]z^3\\
    &+ \frac{\alpha(\alpha+4)^n}{\alpha^n(1-\beta)}[a_5+\frac{(\alpha-1)}{2!}(2a_2a_4+a_2^2)\\
     &+\frac{(\alpha-1)(\alpha-2)}{3!}3a_2^2a_3+\frac{(\alpha-1)(\alpha-2)(\alpha-3)a_2^4}{4!}]z^4\\
    =: &1+c_1z+c_2z^2+c_3z^3+c_4z^4\ldots
\end{split}
\end{equation}
Observe that because of the normalization on $f(z)$, that is, $f(z)=0$ and $f^\prime(z)=1$, it follows that $P(z)$ is analytic in the unit disk $E$, with positive real part, $\Re\{P(z)\}>0$. As a consequence, we conclude that $P(z)\in {\cal P}$, and it follows that for each $i=1,2,\ldots$, we have that, (see \cite{GA}).
\begin{equation}\label{3}
|c_i|\leq 2
\end{equation}
Now, comparing coefficients in equation(\ref{2}) above we get that

\begin{eqnarray}
c_1&=&\frac{(\alpha+1)^n}{\alpha^{n-1}(1-\beta)}a_2,\\
c_2&=&\frac{(\alpha+2)^n}{\alpha^{n-1}(1-\beta)}[a_3+\frac{(\alpha-1)}{2!}a_2^2],\\
c_3&=&\frac{(\alpha+3)^3}{\alpha^{n-1}(1-\beta)}[a_4+\frac{(\alpha-1)}{2!}2a_2a_3+\frac{(\alpha-1)(\alpha-2)}{3!}a_2^3].
\end{eqnarray}
By applying inequality $(3)$ to equation $(4)$ above, we obtain
$$
|a_2|\leq \frac{2(1-\beta)\alpha^{n-1}}{(\alpha+1)^n},
$$
which is inequality (1) in Theorem 2.1.
Substituting $a_2$ in equation $(5)$ above and simplifying gives
$$
a_3=\frac{\alpha^{n-1}(1-\beta)}{(\alpha+2)^n}c_2 -\frac{(\alpha-1)\alpha^{2n-2}(1-\beta)^2}{2(\alpha+1)^{2n}}c_1^2.
$$
Observe that in the second term in the above expression for $a_3$, every factor is a square, hence positive, except for $(\alpha-1)$ which is negative for $0\leq \alpha <1$, and zero for $\alpha =1$. Consequently, we distinguish two cases; the case when $0\leq \alpha\leq 1$ and the case when $\alpha \geq 1$. Again, applying inequality $(3)$ in each of these cases yields

\begin{equation*}
\begin{split}
|a_3|\leq &\frac{2\alpha^{n-1}(1-\beta)}{(\alpha+2)^n}-\frac{2(\alpha-1)\alpha^{2n-2}(1-\beta^2)}{(\alpha+1)^{2n}}\\
         = &2\alpha^{n-1}(1-\beta)[\frac{1}{(\alpha+2)^n}-\frac{(\alpha-1)\alpha^{n-1}(1-\beta)}{(\alpha+1)^{2n}}]\\
         = &2\alpha^{n-1}(1-\beta)[\frac{(\alpha+1)^{2n}-(\alpha-1)\alpha^{n-1}(1-\beta)(\alpha-2)}{(\alpha+2)^n(\alpha+1)^{2n}}]\\
           &\mbox{for}\; 0\leq \alpha\leq 1,
\end{split}
\end{equation*}
and
$$
|a_3|\leq \frac{2(1-\beta)\alpha^{n-1}}{(\alpha+2)^n},\; \mbox{for} \; \alpha\geq 1,
$$
which are the first and second parts respectively in inequality (2) of Theorem 2.1.

Now, substituting $a_2$ and $a_3$ into equation (6) and simplifying yields

\begin{equation}
\begin{split}
a_4=&\frac{\alpha^{n-1}(1-\beta)c_3}{(\alpha+3)^n} - \frac{(\alpha-1)\alpha^{n-1}(1-\beta)}{(\alpha+1)^n}c_1[\frac{\alpha^{n-1}(1-\beta)}{(\alpha+2)^n}c_2\\
      &-\frac{(\alpha-1)\alpha^{2n-2}(1-\beta)^2}{2(\alpha+1)^{2n}}c_1^2]-\frac{(\alpha-1)(\alpha-2)}{3!}.\frac{\alpha^{2n-2}(1-\beta)^2}{(\alpha+1)^{2n}}c_1^2\,.
\end{split}
\end{equation}

Once more, we consider two cases; firstly, suppose that $0\leq\alpha\leq 1$, then applying inequality $(3)$ to the above expression for $a_4$ (equation (7)), and simplifying gives the following estimate;
\begin{equation}
\begin{split}
|a_4|\leq &\frac{2\alpha^{n-1}(1-\beta)}{(\alpha+3)^n}-\frac{2(\alpha-1)\alpha^{n-1}(1-\beta)}{(\alpha+1)^n}[\frac{2\alpha^{n-1}(1-\beta)}{(\alpha+2)^n}\\
          &-\frac{2(\alpha-1)\alpha^{2n-2}(1-\beta)^2}{(\alpha+1)^{2n}}]-\frac{2(\alpha-1)(\alpha-2)\alpha^{2n-2}(1-\beta)^2}{3(\alpha+1)^{2n}}\\
         =&\frac{2\alpha^{n-1}(1-\beta)}{(\alpha+3)^n}-\frac{4(\alpha-1)\alpha^{2n-2}(1-\beta)^2}{(\alpha+1)^n(\alpha+2)^n}\\
          &=\frac{4(\alpha-1)^2\alpha^{3n-3}(1-\beta)^3}{(\alpha+1)^{2n}}]-\frac{2(\alpha-1)(\alpha-2)\alpha^{2n-2}(1-\beta)^2}{3(\alpha+1)^{3n}}\\
         =&\frac{A_1-A_2+A_3-A_4}{3(\alpha+2)^n(\alpha+3)^n(\alpha+1)^{3n}}\,,
\end{split}
\end{equation}
where the $A_i's$ are as specified in Theorem 2.1.
Next, observe that $(\alpha -1)$ will be negative if $\alpha\geq 1$, hence the second and third terms in inequality (8) will both be negative as such, thus the second part of inequality (3) of Theorem 2.1 follows. That is
$$
|a_4|\leq\frac{2(1-\beta)\alpha^{n-1}}{(\alpha+3)^3},\, \mbox{for}\,\alpha\geq 1.
$$
Finally, equality is attained in inequality (1) of Theorem 2.1 for the Koebe function
$$
k(z)=\frac{z}{(1-z)^2}=z+2z^2+3z^3+\ldots,
$$
since for this function,
$$
\frac{D^n[k(z)^\alpha]/(\alpha^nz^n)-\beta}{(1-\beta)}=1+\frac{2\alpha(\alpha+1)^n}{\alpha^n(1-\beta)}+\ldots.
$$
\end{proof}

\begin{rem}
Theorem 2.1 in fact generalizes and unifies certain known results involving coefficient bounds for a good number of subclasses of univalent functions. In particular, if we take $n=1$ and $\beta=0$ in the definition of $T_n^\alpha(\beta)$, our results tally with those obtained for the class $B_1(\alpha)$, (see \cite{RS}).
Furthermore, if we let $\alpha=0$, respectively, $\alpha=1$, Theorem 2.1 gives the well-known coefficient inequalities for the class $S^*$, (the class of starlike functions with respect to the origin), respectively, ${\cal P}^\prime$, (class consisting of derivatives of functions in class ${\cal P}$) \cite{GA}.
\end{rem}

\section{Estimation of the functional $|a_3-\mu a_2^2|$ in $T_n^\alpha(\beta)$}

Intimately related with the coefficient problem is the problem of estimating the functional $|a_3-\mu a_2^2|$, where $\mu$ is a real parameter. In this section, we give estimates for this functional in the class $T_n^\alpha(\beta)$.

\m
We recall from the proof of Theorem 2.1 that the coefficients $a_2$ and $a_3$ were given by the expressions
\begin{eqnarray*}
a_2&=&\frac{c_1(1-\beta)\alpha^{n-1}}{(\alpha+1)^n}~\; \mbox{and}\,\\
a_3&=&\frac{c_2(1-\beta)\alpha^{n-1}}{(\alpha+2)^n}-\frac{c_1^2(1-\alpha)(1-\beta)^2\alpha^{2n-2}}{2(\alpha+1)^{2n}}\,,
\end{eqnarray*}
where $c_1$ and $c_2$ satisfy the inequality (3),
$$
|c_i|\leq 2~\, \mbox{for}~\, i=1,2,
$$
since they are the coefficients of a function belonging to the class ${\cal P}$.
Now, for $\mu$ a real number, we have that
\begin{eqnarray*}
a_3-\mu a_2^2 &=&\frac{c_2(1-\beta)\alpha^{n-1}}{(\alpha+2)^n}-\frac{c_1^2(\alpha-1)(1-\beta)^2\alpha^{2n-2}}{2(\alpha+1)^{2n}}+\frac{\mu c_1^2(1-\beta)^2\alpha^{2n-2}}{(\alpha+1)^{2n} }\\
           &=&\frac{c_2\alpha^{n-1}(1-\beta)}{(\alpha+2)^n}-\frac{c_1^2(\alpha-1)\alpha^{2n-2}(1-\beta)^2}{2(\alpha+1)^{2n}}[(\alpha-1)-2\mu]\,.
\end{eqnarray*}
Using inequality (3) together with the triangular inequality, we obtain
\begin{equation}
|a_3-\mu a_2^2|\leq \frac{2\alpha^{n-1}(1-\beta)}{(\alpha+2)^n}+\frac{2(\alpha-1)\alpha^{2n-2}(1-\beta)^2}{(\alpha+1)^{2n}}|2\mu-(\alpha-1)|\,.
\end{equation}
Considering the absolute value in the second term in the right hand side of inequality (3), we distinguish two cases: on the one hand, if $2\mu-(\alpha-1)\leq 0$, then $\mu \leq1/2(\alpha-1)$ and the second term in inequality (9) becomes negative, and so we obtain the estimate
$$
|a_3-\mu a_2^2|\leq \frac{2\alpha^{n-1}(1-\beta)}{(\alpha+2)^n}~\; \mbox{if}~\; \mu\leq1/2(\alpha-1)\,.
$$

On the other hand, if $2\mu-(\alpha-1)\geq 0$, then $\mu\geq 1/2(\alpha-1)$, consequently, the second term in the right hand side of inequality (9) is positive and this time we obtain the estimate
$$
|a_3-\mu a_2^2|\leq\frac{2\alpha^{n-1}(1-\beta)}{(\alpha+2)^n}+\frac{2(\alpha-1)\alpha^{2n-2}(1-\beta)^2}{(\alpha+1)^{2n}}[2\mu-(\alpha-1)]\,.
$$
We have thus proved the following
\begin{thm}
Let $f(z)=z+\sum_{k=2}^\infty a_kz^k \in T_n^\alpha(\beta)$, then for any real parameter $\mu\in\mathbb R$,
\begin{eqnarray*}
|a_3-\mu a_2^2|&\leq&\begin{cases}\frac{2\alpha^{n-1}(1-\beta)}{(\alpha+2)^n}~,\;\mbox{if}~\; \mu\leq\frac{\alpha-1}{2}\,,\\
                  \frac{2\alpha^{n-1}(1-\beta)}{(\alpha+2)^n}+\frac{2(\alpha-1)\alpha^{2n-2}(1-\beta)^2}{(\alpha+1)^{2n}}[2\mu-(\alpha-1)]~,\;\mbox{if}~\;\mu\geq\frac{\alpha-1}{2}\,.
\end{cases}
\end{eqnarray*}
\end{thm}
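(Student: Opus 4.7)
The plan is to leverage the two expressions for $a_2$ and $a_3$ already in hand from the proof of Theorem 2.1, namely
$$
a_2 = \frac{c_1(1-\beta)\alpha^{n-1}}{(\alpha+1)^n},\qquad
a_3 = \frac{c_2(1-\beta)\alpha^{n-1}}{(\alpha+2)^n} - \frac{c_1^2(\alpha-1)(1-\beta)^2\alpha^{2n-2}}{2(\alpha+1)^{2n}},
$$
where $c_1,c_2$ are coefficients of a function in the Carath\'eodory class $\mathcal{P}$ and therefore satisfy $|c_1|,|c_2|\le 2$. Since these formulas express $a_2,a_3$ directly in terms of $c_1,c_2$, forming the Fekete--Szeg\"o combination $a_3-\mu a_2^2$ yields a clean polynomial in $c_1,c_2$.

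First I would substitute and collect: the $a_2^2$ term contributes $\mu c_1^2 (1-\beta)^2\alpha^{2n-2}/(\alpha+1)^{2n}$, which combines with the existing $c_1^2$ term in $a_3$ to give a single coefficient proportional to $(\alpha-1)-2\mu$. The result is the representation
$$
a_3-\mu a_2^2 = \frac{c_2\alpha^{n-1}(1-\beta)}{(\alpha+2)^n} - \frac{c_1^2\alpha^{2n-2}(1-\beta)^2}{2(\alpha+1)^{2n}}\bigl[(\alpha-1)-2\mu\bigr].
$$
Next I would apply the triangle inequality together with $|c_1|^2\le 4$ and $|c_2|\le 2$ to bound the modulus by
$$
\frac{2\alpha^{n-1}(1-\beta)}{(\alpha+2)^n} + \frac{2\alpha^{2n-2}(1-\beta)^2}{(\alpha+1)^{2n}}\bigl|2\mu-(\alpha-1)\bigr|.
$$

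Finally I would split into two cases according to the sign of $2\mu-(\alpha-1)$, since this is what allows (in the favorable case) the second term to be dropped rather than merely absorbed in absolute value. If $\mu\le \tfrac{1}{2}(\alpha-1)$, then $(\alpha-1)-2\mu\ge 0$ and, under the implicit sign convention $\alpha\ge 1$ used throughout this section, the second term in the original expression for $a_3-\mu a_2^2$ has the same sign as the first, but more importantly the whole coefficient of $c_1^2$ can be bounded crudely against the known piece; this gives the first branch of the estimate. If instead $\mu\ge\tfrac{1}{2}(\alpha-1)$, the absolute value opens to $2\mu-(\alpha-1)$, producing the second branch verbatim.

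The calculations are routine, so the main obstacle is really bookkeeping: getting the signs of $(\alpha-1)$ and $(\alpha-1)-2\mu$ correctly aligned so that the bound one writes is genuinely an upper bound for $|a_3-\mu a_2^2|$ rather than for a related signed quantity. In particular, one must be careful that the factor $(\alpha-1)$ appearing multiplicatively in the second branch is intended in the regime $\alpha\ge 1$; otherwise the two cases should be stated symmetrically in $|\alpha-1|$ and $|2\mu-(\alpha-1)|$. Apart from that caveat, the argument is a direct application of the Carath\'eodory bound to the formula for $a_3-\mu a_2^2$.
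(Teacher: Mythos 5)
Your route is the same one the paper takes: substitute the expressions for $a_2$ and $a_3$ obtained in the proof of Theorem 2.1, collect the two $c_1^2$ contributions into a single coefficient involving $2\mu-(\alpha-1)$, apply $|c_1|\le 2$, $|c_2|\le 2$ with the triangle inequality, and split according to the sign of $2\mu-(\alpha-1)$. Two points need attention, though. First, the bound you arrive at, $\frac{2\alpha^{n-1}(1-\beta)}{(\alpha+2)^n}+\frac{2\alpha^{2n-2}(1-\beta)^2}{(\alpha+1)^{2n}}\,|2\mu-(\alpha-1)|$, is the algebraically natural one, but it is not the inequality asserted in the theorem: the second branch of the statement carries an extra multiplicative factor $(\alpha-1)$, which the paper's intermediate inequality also carries. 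So as written you have established a different estimate from the one stated, and your closing caveat about restating things in terms of $|\alpha-1|$ does not close that gap. Relatedly, check the sign of the $\mu$-term: $-\mu a_2^2$ contributes $-\mu c_1^2(1-\beta)^2\alpha^{2n-2}/(\alpha+1)^{2n}$, so the combined coefficient is proportional to $(\alpha-1)+2\mu$ rather than $(\alpha-1)-2\mu$; the paper makes the same slip.

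Second, and more seriously, the first branch does not follow from your argument. Once you have applied the triangle inequality, the second term is an absolute value and hence nonnegative whatever the sign of $2\mu-(\alpha-1)$; it cannot be ``dropped'' or ``bounded crudely against the known piece'' when $\mu\le(\alpha-1)/2$. Since $c_1$ and $c_2$ are complex, an inequality of the form $|c_2-\nu c_1^2|\le|c_2|$ for $\nu\ge 0$ is simply false in general. The paper's own proof commits the same error at the same spot. The standard repair is the refined Carath\'eodory estimate $|c_2-\nu c_1^2|\le 2$ for $0\le\nu\le 1$ (a consequence of $|c_2-c_1^2/2|\le 2-|c_1|^2/2$), applied with $\nu$ equal to the coefficient of $c_1^2$ relative to that of $c_2$ in your expression for $a_3-\mu a_2^2$; one must then check that the stated range $\mu\le(\alpha-1)/2$ actually forces $\nu\in[0,1]$. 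Without some such input beyond $|c_i|\le 2$, the first branch is unproved, both in your write-up and in the paper's.
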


\section{Distortion Theorem for functions in $T_n^\alpha(\beta)$}
Many authors have turned attention to the so called coefficient estimate problems for different subclasses of univalent functions. Closely related to this problem is the problem of determining how large the modulus of a univalent function together with its derivatives can be in a particular subclass. Such results, referred to as distortion theorems, provide important information about the geometry of functions in these subclasses. In this section, we give a distortion theorem for functions in $T_n^\alpha(\beta)$.

\begin{thm}
Let $f(z)\in T_n^\alpha)\beta)$ and $z=re^{i\theta}$, where $0\leq \theta\leq 2\pi$, then
$$
\frac{(r-1)^2-\alpha^n(1+r)^2}{2r(1+r)}\leq\Re\left\{\frac{D^nf^\alpha(z)}{z^\alpha}\right\}\leq\alpha^n\frac{1+r}{1-r}\,.
$$
\end{thm}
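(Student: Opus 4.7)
The plan is to leverage the Carath\'eodory-class machinery already set up in the proof of Theorem~2.1. Define
\[
P(z) \;=\; \frac{D^n[f^\alpha(z)]/(\alpha^n z^\alpha) \;-\; \beta}{1-\beta},
\]
which is analytic on $E$ with $P(0)=1$ and, by the defining inequality of $T_n^\alpha(\beta)$, satisfies $\Re P(z) > 0$, so $P\in\mathcal{P}$. Inverting gives
\[
\frac{D^n f^\alpha(z)}{z^\alpha} \;=\; \alpha^n\bigl[(1-\beta)P(z)+\beta\bigr],
\]
which reduces the problem to two-sided estimates for $\Re P(z)$ on $|z|=r$.

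For the upper bound I would invoke the classical Carath\'eodory estimate $\Re P(z)\leq\tfrac{1+r}{1-r}$, valid for every $P\in\mathcal{P}$. Because $\tfrac{1+r}{1-r}\geq 1$ and $0\leq\beta\leq 1$, the convex combination satisfies $(1-\beta)\tfrac{1+r}{1-r}+\beta\leq\tfrac{1+r}{1-r}$, and multiplying through by $\alpha^n$ gives the stated upper bound with essentially no further work. For the lower bound I would apply Schwarz's lemma to $\omega(z)=(P(z)-1)/(P(z)+1)$, which is a self-map of $E$ fixing $0$; this yields $|P(z)-1|\leq r|P(z)+1|$, equivalent to the classical disk constraint
\[
\Bigl|P(z)-\tfrac{1+r^2}{1-r^2}\Bigr|\leq\tfrac{2r}{1-r^2},
\]
and in particular $\Re P(z)\geq\tfrac{1-r}{1+r}$. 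Substituting back into the displayed expression for $D^n f^\alpha(z)/z^\alpha$ gives a lower bound on $\Re\{D^n f^\alpha(z)/z^\alpha\}$.

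The subtle step is reaching the precise algebraic form $\tfrac{(r-1)^2-\alpha^n(1+r)^2}{2r(1+r)}$ stated in the theorem. This expression is strictly weaker than the sharp bound $\alpha^n\tfrac{1-r}{1+r}$ one extracts from the argument above, so somewhere in the rearrangement a slack-producing step must be interposed (the two most natural candidates being the crude estimate $\Re P(z)\geq -|P(z)|\geq -\tfrac{1+r}{1-r}$, or folding the $\beta$-contribution and the factor $\alpha^n$ into a common denominator of $2r(1+r)$ and simplifying). Identifying the correct slack so that the denominator $2r(1+r)$ and the sign pattern $(r-1)^2-\alpha^n(1+r)^2$ in the numerator emerge cleanly is the principal obstacle; once that is pinned down, the remainder is routine algebraic bookkeeping.
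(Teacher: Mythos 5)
Your setup is exactly the paper's: the paper also reduces the statement to the Carath\'eodory class by writing $D^nf^\alpha(z)/z^\alpha=\alpha^nP(z)$ with $P\in{\cal P}$, and its Lemma~4.2 representation $D^nf^\alpha(z)/z^\alpha=(2\beta-1)+\tfrac{2(1-\beta)}{1+z\phi(z)}$ with $|\phi|\le1$ is just your Schwarz-lemma step on $\omega=(P-1)/(P+1)$ written in a different normalization. Your upper bound is complete, correct, and coincides with the paper's (the paper simply uses $\Re P\le\tfrac{1+r}{1-r}$ and multiplies by $\alpha^n$; your observation that the convex combination with $\beta$ only helps is fine).

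The gap you flag in the lower bound is genuine, and you should know it is not bridgeable by the ``insert the right slack'' strategy you propose. Your sharp estimate $\Re\{D^nf^\alpha(z)/z^\alpha\}\ge\alpha^n\bigl[(1-\beta)\tfrac{1-r}{1+r}+\beta\bigr]$ is correct, but it does \emph{not} dominate the stated expression $\tfrac{(r-1)^2-\alpha^n(1+r)^2}{2r(1+r)}$ uniformly: when $\alpha^n<1$ that expression tends to $+\infty$ as $r\to0^+$, while $\Re\{D^nf^\alpha(z)/z^\alpha\}\to\alpha^n$, so no weakening of your bound can produce it (for $\alpha^n\ge1$ the stated bound is at most $-2/(1+r)$ and holds trivially). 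Nor does the paper's own argument supply the missing step: it estimates $\Re\{1/(1+z\phi(z))\}\le1/(1+r|\phi(z)|)$, whereas for $|w|\le r$ one has $\Re\{1/(1+w)\}\ge1/(1+r)$ (the inequality runs the other way), and it then combines two upper bounds for $\Re P$ to ``solve for $\beta$,'' a chain that does not yield the displayed lower bound. So your proposal proves the upper half of the theorem by the paper's method, replaces the lower half with the correct sharp statement, and the part you could not reach is precisely the part the paper does not validly establish either.
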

In order to prove this theorem, we will need the following result which gives a representation of functions in class $T_n^\alpha(\beta)$ in terms of an analytic function in the unit disk.

\begin{lem}
The function $f(z)$, $z\in E$ belongs to the class $T_n^\alpha(\beta)$ if and only if there exist a function $\phi(z)$, analytic in $E$ with the property that $|\phi(z)|\leq1$ such that
$$
\frac{D^nf^\alpha(z)}{z^\alpha}=(2\beta-1)+\frac{2(1-\beta)}{1+z\phi(z)}\,.
$$
\end{lem}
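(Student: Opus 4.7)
The plan is to reduce the assertion to the classical Schwarz-function representation of Carath\'eodory-class members. If one sets
$$g(z) := \frac{D^n f^\alpha(z)}{\alpha^n z^\alpha},$$
then, as already observed in the expansion used in the proof of Theorem 2.1, $g$ is analytic in $E$ with $g(0)=1$, and membership in $T_n^\alpha(\beta)$ is equivalent to $\Re g(z)>\beta$ on $E$. The asserted identity is, up to the $\alpha^n$ normalizing factor, a direct translation of the standard bijection between such $g$ and Schwarz functions of the form $z\phi(z)$.

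For the forward direction, suppose $f\in T_n^\alpha(\beta)$. Then $p(z):=(g(z)-\beta)/(1-\beta)$ is analytic in $E$ with $p(0)=1$ and $\Re p>0$, hence $p\in{\cal P}$. By the standard representation theorem for ${\cal P}$ there is a Schwarz function $w$ on $E$ (analytic, $w(0)=0$, $|w(z)|<1$) with
$$p(z)=\frac{1-w(z)}{1+w(z)}.$$
An application of Schwarz's lemma to $w$ lets us write $w(z)=z\phi(z)$ with $\phi$ analytic on $E$ and $|\phi(z)|\leq 1$. Solving back for $g$ and simplifying yields
$$g(z)=\beta+(1-\beta)\,\frac{1-z\phi(z)}{1+z\phi(z)}=(2\beta-1)+\frac{2(1-\beta)}{1+z\phi(z)},$$
which is the claimed representation.

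For the converse, given any $\phi$ analytic on $E$ with $|\phi(z)|\leq 1$, set $w(z)=z\phi(z)$, so that $|w(z)|\leq|z|<1$ throughout $E$. A short algebraic check reverses the above manipulation to show that the proposed right-hand side equals $\beta+(1-\beta)(1-w)/(1+w)$. Since the M\"obius map $\zeta\mapsto(1-\zeta)/(1+\zeta)$ carries the open unit disk onto the open right half-plane, $\Re\{(1-w(z))/(1+w(z))\}>0$ on $E$, whence $\Re g(z)>\beta$ and $f\in T_n^\alpha(\beta)$.

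The main obstacle is not any single calculation but the clean invocation of the Carath\'eodory representation for ${\cal P}$ together with Schwarz's lemma; once those are admitted as standard (and cited, for instance, from \cite{GA}), what remains is the routine identity
$$\beta+(1-\beta)\frac{1-w}{1+w}=(2\beta-1)+\frac{2(1-\beta)}{1+w}$$
together with noting that $|z\phi(z)|<1$ on $E$, both of which are immediate.
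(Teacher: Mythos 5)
Your proof is correct, and for the forward implication it is essentially the paper's argument in different packaging: the paper constructs the M\"obius transform $B(z)=\bigl(A(z)-(1-\beta)\bigr)/\bigl(A(z)+(1-\beta)\bigr)$ by hand and applies Schwarz's lemma to write $B(z)=z\phi(z)$, while you route through the Carath\'eodory class via $p=(1-w)/(1+w)$ and then factor the Schwarz function $w$; the two are related by $w=-B$, and the sign on $z\phi(z)$ is immaterial since $\phi$ may be replaced by $-\phi$. Where you genuinely diverge is the converse. The paper argues it only by letting $z\to 1^{-}$ and checking a single radial limit, which does not establish $\Re\{\cdot\}>\beta$ at every point of $E$; your observation that $\zeta\mapsto(1-\zeta)/(1+\zeta)$ carries the open disk onto the open right half-plane, so that $\Re p>0$ throughout $E$, is the correct way to close that direction and is a real improvement over the printed proof. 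You are also right to flag the $\alpha^{n}$ normalization: as stated the lemma's right-hand side equals $1$ at $z=0$ while $D^{n}f^{\alpha}(z)/z^{\alpha}\to\alpha^{n}$, so the identity is consistent only with the denominator $\alpha^{n}z^{\alpha}$ (the paper's proof silently assumes $B(0)=0$, which fails otherwise). Do note, however, that with your normalization the defining condition $\Re\{D^{n}f^{\alpha}/z^{\alpha}\}>\beta$ reads $\Re g>\beta/\alpha^{n}$ rather than $\Re g>\beta$; this inconsistency pervades the paper and your write-up inherits it rather than resolving it, so a fully tight version should fix one convention and carry it through.
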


\begin{proof}
Set $A(z)=\frac{D^nf^\alpha}{z^\alpha}-\beta$ and define
$$
B(z)=\frac{A(z)-(1-\beta)}{A(z)+(1-\beta)}=\frac{D^nf(z)/z^\alpha-1}{D^nf^\alpha(z)/z^\alpha-(2\beta-1)}\,.
$$
Clearly, the function $B(z)$ ia analytic for $|z|<1$ and since by the normalization on $f(z)$, that is, $f(0)=0$ and $f^\prime(0)=1$, it follows that $B(0)=0$. Consequently, we have that $|B(z)|<1$ for $|z|<1$, and so by Schwarz's lemma, we get that $|B(z)|<|z|$ for $|z|<1$. That is,
$$
\left|\frac{D^nf(z)/z^\alpha-1}{D^nf^\alpha(z)/z^\alpha-(2\beta-1)}\right|<|z|~ \; \mbox{for} \; |z|<1\,.
$$
Equivalently, we get that

\begin{equation}
\frac{D^nf(z)/z^\alpha-1}{D^nf^\alpha(z)/z^\alpha-(2\beta-1)}=z\phi(z)
\end{equation}
where $\phi(z)$ is an analytic function with the property that $|\phi(z)|\leq 1$ for $|z|<1$. Solving equation (1) for $\frac{D^nf^\alpha}{z^\alpha}$, we obtain

\begin{equation}
\frac{D^nf^\alpha(z)}{z^\alpha}=\frac{1-(2\beta-1)z\phi(z)}{1-z\phi(z)}=(2\beta-1)+\frac{2(1-\beta)}{1+z\phi(z)}\,.
\end{equation}

\s
Conversely, suppose that equation (2) holds, then by letting $z$ be real and approach $1$ from below, i.e., $z\rightarrow 1^-$, we get that
$$
\frac{D^nf^\alpha(z)}{z^\alpha}>(2\beta-1)+\frac{2(1-\beta)}{1+1.1}=2\beta-1+1-\beta=\beta\,.
$$
Hence $f(z)\in T_n^\alpha(\beta)$, completing the proof.

\end{proof}

{\it Proof of Theorem 1.1}. Imploring Lemma 1.2, we have that $f(z)\in T_n^\alpha(\beta)$ if and only if

\begin{equation}
\frac{D^nf^\alpha(z)}{z^\alpha}=(2\beta-1)+\frac{2(1-\beta)}{1+z\phi(z)}=:\alpha^nP(z)\,,
\end{equation}
where $P(z)\in{\cal P}$ is a function with positive real part. It is known that for such functions, (see for e.g.,[?]),

$$
\Re\{P(z)\}\leq\frac{1+r}{1-r}\, \mbox{where}\; |z|<1\,.
$$
On the other hand, taking real parts in equation $(3)$ gives

\begin{eqnarray*}
\Re\{P(z)\}&=&\frac{(2\beta-1)}{\alpha^n}+\frac{2(1-\beta)}{\alpha^n}\Re\left\{\frac{1}{1+z\phi(z)}\right\}\\
       &\leq &\frac{(2\beta-1)}{\alpha^n}+\frac{2(1-\beta)}{\alpha^n}.\frac{1}{1+r|\phi(z)|}\\
       &\leq &\frac{(2\beta-1)}{\alpha^n}+\frac{2(1-\beta)}{\alpha^n(1+r)}\; \; \mbox{since}\; \; |\phi(z)|<1\,.
\end{eqnarray*}
It therefore follows that
$$
\frac{1+r}{1-r}\geq \frac{(2\beta-1)}{\alpha^n}+\frac{2(1-\beta)}{\alpha^n(1+r)}\,,
$$
and solving for $\beta$ we obtain
$$
\beta\geq \frac{(r-1)^2-\alpha^n(1+r)}{2r(1+r)}\,,
$$
from which we conclude that
$$
\Re\left\{\frac{D^nf^\alpha(z)}{z_\alpha}\right\}\geq \frac{(2\beta-1)}{\alpha^n}+\frac{2(1-\beta)}{\alpha^n(1+r)}\,,
$$
which in fact is the left hand side of the inequality of theorem 1.1. The right hand side is straight forward since
$$
\Re\left\{\frac{D^nf^\alpha(z)}{z_\alpha}\right\}=\alpha^n\Re\{P(z)\}\leq\alpha^n\frac{(1+r)}{(1-r)}\,.
$$
This completes the proof of Theorem 1.1. \qed

\end{document}